\documentclass[12pt]{amsart}
\usepackage{amssymb, amsmath}
\title{On the Equational Artinian Algebras}
\author{P. Modabberi \and M. Shahryari}
\address{ P. Modabberi: Department of Pure Mathematics,  Faculty of Mathematical
Sciences, University of Tabriz, Tabriz, Iran}
\email{p\_modabberi@tabrizu.ac.ir}
\address{M. Shahryari: Department of Pure Mathematics,  Faculty of Mathematical
Sciences, University of Tabriz, Tabriz, Iran}
\email{mshahryari@tabrizu.ac.ir}

\markright{\protect Equational Artinian Algebras}
\pagestyle{myheadings}

\newtheorem {theorem}{Theorem}

\newtheorem{definition}{Definition}
\begin{document}

\maketitle

\begin{abstract}
Equational Artinian algebras were introduced in our previous work: {\em Equational conditions in universal algebraic geometry, to appear in Algebra and Logic, 2015}. In this note, we define the notion of {\em radical topology with respect to an algebra $A$} and using the well-known K\"{o}nig lemma in graph theory, we show that the algebra $A$ is equational Artinian iff this topology is noetherian. This completes the analogy between equational noetherian and equational Artinian algebras. As an application, we prove that every  ultra-power of an equational Artinian algebra is equational Artinian. This provides man examples of equational Artinian groups.
\end{abstract}

{\bf AMS Subject Classification} Primary 03C99, Secondary 08A99 and 14A99.\\
{\bf Keywords} algebraic structures; equations; algebraic set; radical ideal; coordinate algebra; Zariski topology;
 equationally noetherian algebras;  equational Artinian algebras; radical topology.

\section{Introduction}
Universal algebraic geometry is a new area of modern algebra, whose
subject is basically the study of equations over an arbitrary
algebraic structure $A$. In the classical algebraic geometry $A$ is
a field.  Many articles already published about algebraic geometry
over groups, see \cite{BMR1}, \cite{BMR2}, \cite{BMRom},
\cite{KM}, and \cite{MR}. In an outstanding series of papers, O. Kharlampovich and A. Miyasnikov
developed algebraic geometry over free groups to give
affirmative answer for an old problem of Alfred Tarski concerning
elementary theory of free groups (see \cite{KMTarski} and also \cite{SEL} for the independent solution of Z. Sela).
Also in \cite{KMTarski2}, a problem of Tarski about decidablity of the elementary theory of free groups is solved. Algebraic
geometry  is also developed for algebras
other than groups, so in a series of papers, the systematic study of universal algebraic geometry is
done  by V. Remeslennikov, A. Myasnikov and
E. Daniyarova in \cite{DMR1}, \cite{DMR2}, \cite{DMR3}, and
\cite{DMR4}.

Equational noetherian algebras are among important classes of examples extensively studied in universal algebraic  geometry. These are algebras the Zariski topology in which satisfies descending chain condition for closed subsets. In \cite{ModSH} we studied equational Artinian algebras in analogy to equational noetherian ones. We defined an equational Artinian algebra as an algebra the Zariski topology in which satisfies ascending chain condition for {\em algebraic sets}. It seems that there should be a full symmetry between two similar notions, however, in \cite{ModSH} we couldn't complete description of this symmetry. In this note, we will define {\em radical topology with respect to an algebra $A$} over the set of atomic formulas of the underlying language $\mathcal{L}$ which is a counterpart for the Zariski topology. Then we will show that $A$ is equational Artinian iff this topology satisfies descending chain condition for {\em closed} subsets. As a results, we see that although the property of being equational noetherian translates in terms of Zariski topology, the property of being equational Artinian translates in terms of the corresponding radical-topology. The main tool for our argument is the well-known lemma of K\"{o}nig from graph theory which says that every tree with vertices of finite degrees is finite if the length of any chain is finite. As an application, we prove that every  ultra-power of an equational Artinian algebra is equational Artinian. This provides many examples of equational Artinian groups.

\section{Basic notions}
This section is devoted to a fast review of the basic concepts of the universal algebraic geometry. We suggest \cite{BS}
for reader who is not familiar to the universal algebra. The reader also would use \cite{DMR1}, \cite{DMR2}, \cite{DMR3}, and
\cite{DMR4}, for extended exposition of the universal algebraic geometry.  Our notations here are almost the same as in the above mentioned papers.
For the sake of  simplicity, we define our notions in the coefficient free frame and then one can extend all the definitions and results to the case of non-coefficient free algebraic geometry.

Fix an algebraic language $\mathcal{L}$ and a set of variables $X=\{ x_1, \ldots, x_n\}$. An equation is a pair $(p, q)$ of the elements of the term algebra $T_{\mathcal{L}}(X)$. In many cases, we assume that such an equation is the same as the  atomic formula $p(x_1, \ldots, x_n)\approx q(x_1, \ldots, x_n)$ or $p\approx q$ in short. Hence, in this article the set $At_{\mathcal{L}}(X)$ of atomic formulae in the language $\mathcal{L}$ and the product algebra $T_{\mathcal{L}}(X)\times T_{\mathcal{L}}(X)$ are assumed to be equal.

Any subset $S\subseteq At_{\mathcal{L}}(X)$ is called a {\em system of equations} in the language $\mathcal{L}$. A system $S$ is called {\em consistent} over an algebra $A$, if there is an element $(a_1, \ldots,
a_n)\in A^n$ such that for all equations $(p\approx q)\in S$, the
equality
$$
p^A(a_1, \ldots, a_n)=q^A(a_1, \ldots, a_n)
$$
holds. Otherwise, we say that $S$ is {\em in-consistent} over $A$. Note that, $p^A$ and $q^A$ are the corresponding term functions on $A^n$. A system of equations $S$ is called an ideal,
if it corresponds to a  congruence on $T_{\mathcal{L}}(X)$. For an arbitrary system of
equations $S$, the ideal generated by $S$, is the smallest congruence containing $S$ and it is denoted by $[S]$.

For an algebra $A$ of type $\mathcal{L}$, an element $(a_1, \ldots, a_n)\in A^n$  will be denoted by $\overline{a}$, sometimes. Let  $S$ be  a system of equations. Then the set
$$
V_A(S)=\{ \overline{a}\in A^n: \forall (p\approx q)\in S,\ p^A(\overline{a})=q^A(\overline{a})\}
$$
is called an {\em algebraic set}. It is clear that for any non-empty family $\{ S_i\}_{i\in I}$, we have
$$
V_A(\bigcup_{i\in I}S_i)=\bigcap_{i\in I}V_A(S_i).
$$
So, we define a closed set in $A^n$ to be an arbitrary intersections of finite unions of algebraic sets. Therefore, we obtain a topology on $A^n$,
which is called {\em Zariski topology}.

For any set $Y\subseteq A^n$, we define
$$
\mathrm{Rad}(Y)=\{ (p, q): \forall\ \overline{a}\in Y,\ p^A(\overline{a})=q^A(\overline{a})\}.
$$
It is easy to see that $\mathrm{Rad}(Y)$ is an ideal in the term algebra. Any ideal of this type is called an {\em $A$-radical ideal} or a {\em radical ideal} for short.
Note that any ideal in the term algebra is in fact a radical ideal with respect to some suitable algebra. To see the reason, just note that for any ideal $R$ in the term algebra
$T_{\mathcal{L}}(X)$, if we consider the algebra $B(R)=T_{\mathcal{L}}(X)/R$, then $\mathrm{Rad}_{B(R)}(R)=R$.

It is easy to see that a set $Y$ is algebraic if and only if $V_A(\mathrm{Rad}(Y))=Y$. In the general case, we have $V_A(\mathrm{Rad}(Y))=Y^{ac}$,
see \cite{DMR2}. The {\em coordinate algebra} of a set $Y$ is  the quotient algebra
$$
\Gamma(Y)=\frac{T_{\mathcal{L}}(X)}{\mathrm{Rad}(Y)}.
$$
An arbitrary element of $\Gamma(Y)$ is denoted by $[p]_Y$. We define a function $p^Y:Y\to A$ by the rule
$$
p^Y(\overline{a})=p^A(a_1, \ldots,a_n),
$$
which is a  {\em term function} on $Y$. The  set of all such functions will be denoted by $T(Y)$ and it is naturally an algebra of type $\mathcal{L}$.
It is easy to see that the map $[p]_Y\mapsto p^Y$ is a well-defined isomorphism. So, we have $\Gamma(Y)\cong T(Y)$.

For a system of equation, we can also define the radical $\mathrm{Rad}_A(S)$ to be $\mathrm{Rad}(V_A(S))$. Two systems $S$ and $S^{\prime}$
are called equivalent over $A$, if they have the same set of solutions in $A$, i.e. $V_A(S)=V_A(S^{\prime})$. So, clearly $\mathrm{Rad}_A(S)$ is
the largest system which is equivalent to $S$. Note that $[S]\subseteq \mathrm{Rad}_A(S)$.

\begin{definition}
An algebra $A$ is called equational noetherian, if for any system of equations $S$, there exists a finite subsystem $S_0\subseteq S$, which is
equivalent to $S$ over $A$, i.e. $V_A(S)=V_A(S_0)$.
\end{definition}

Many examples of equational
noetherian algebras are introduced in \cite{DMR2}. Among them are noetherian rings and linear groups over noetherian rings as well as  free
groups. In \cite{DMR2}, it is proved that the next four assertions are equivalent:\\

{\em
i- An algebra $A$ is equational noetherian.\\

ii- For any system $S$, there exists a finite $S_0\subseteq [S]$, such that $V_A(S)=V_A(S_0)$.\\

iii- For any $n$, the Zariski topology on $A^n$ is noetherian, i.e. any descending chain of closed subsets terminates.\\

iv- Any chain of coordinate algebras and epimorphisims
$$
\Gamma(Y_1)\to \Gamma(Y_2)\to \Gamma(Y_3)\to \cdots
$$
terminates}.\\

So, in the case of equational noetherian algebras, any closed set in $A^n$ is equal to a minimal finite union of {\em irreducible} algebraic sets which is unique up to a permutation. Note that a set is called irreducible, if it has no proper finite covering consisting of closed sets.
The following theorem is proved in \cite{DMR2}.

\begin{theorem}
Let $A$ be an equational noetherian algebra. Then the following algebras are also equational noetherian:\\

i- any subalgebra and filter-power of $A$.\\

ii- any coordinate algebra over $A$.\\

iii- any fully residually  $A$-algebra.\\

iv- any algebra belonging to the quasi-variety generated by $A$.\\

v- any algebra universally equivalent to $A$.\\

vi- any limit algebra over $A$.\\

vii- any finitely generated algebra defined by a complete atomic type in the universal theory of $A$ or in the set of quasi-identities of $A$.
\end{theorem}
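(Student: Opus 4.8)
The plan is to reduce all seven items to a single statement: \emph{every subalgebra of a filter-power of $A$ is equational noetherian.} Two elementary lemmas carry the argument. The first is a \textbf{restriction lemma}: whenever $B\leq C$ is a subalgebra, every system $S$ in the variables $x_1,\dots,x_n$ satisfies $V_B(S)=V_C(S)\cap B^n$, since the term functions on $B$ are the restrictions to $B^n$ of those on $C$. Hence if a finite $S_0\subseteq S$ has $V_C(S_0)=V_C(S)$, then $V_B(S_0)=V_B(S)$ as well, so being equational noetherian passes to subalgebras. Taking $C=A$ already settles the subalgebra half of (i).

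The heart of the matter is a \textbf{filter-power lemma}. Let $A^{*}=A^{I}/\mathcal F$ be a filter-power (reduced power) of $A$, and let $S$ be a system with a finite $S_0\subseteq S$ satisfying $V_A(S_0)=V_A(S)$, which exists because $A$ is equational noetherian. I would show $V_{A^{*}}(S_0)=V_{A^{*}}(S)$; only the inclusion $\subseteq$ has content. Given $\overline\xi=([f_1],\dots,[f_n])\in V_{A^{*}}(S_0)$, term functions are computed coordinatewise in a reduced power, so for each equation $(p\approx q)$ the set
\[
E_{p,q}=\{\, i\in I:\ p^{A}(f_1(i),\dots,f_n(i))=q^{A}(f_1(i),\dots,f_n(i))\,\}
\]
belongs to $\mathcal F$ precisely when $p^{A^{*}}(\overline\xi)=q^{A^{*}}(\overline\xi)$. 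For the finitely many $(p\approx q)\in S_0$ we have $E_{p,q}\in\mathcal F$, hence $E=\bigcap_{(p,q)\in S_0}E_{p,q}\in\mathcal F$ by closure under finite intersections. For $i\in E$ the tuple $(f_1(i),\dots,f_n(i))$ lies in $V_A(S_0)=V_A(S)$, so $E\subseteq E_{p',q'}$ for every $(p'\approx q')\in S$, and upward closure of $\mathcal F$ yields $E_{p',q'}\in\mathcal F$; thus $\overline\xi\in V_{A^{*}}(S)$. Finiteness of $S_0$ together with the two closure properties of a filter are exactly what is used. In particular the trivial filter $\{I\}$ shows direct powers of $A$ are equational noetherian, while an ultrafilter shows the same for ultrapowers; combined with the restriction lemma, every member of $\mathbf{SP}_f(A)$ (subalgebras of filter-powers of $A$) is equational noetherian.

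What remains is to locate each listed class inside $\mathbf{SP}_f(A)$, invoking standard facts I would cite rather than reprove. Case (i) is immediate. For (ii), $\Gamma(Y)\cong T(Y)$ embeds into the direct power $A^{Y}$ via $[p]_Y\mapsto p^{Y}$. For (iii) and (vi), a fully residually $A$-algebra—and a limit algebra over $A$, being discriminated by $A$—is residually $A$, hence embeds into $A^{\mathrm{Hom}(C,A)}$ through all homomorphisms into $A$. For (v), universal equivalence gives $B\models\mathrm{Th}_{\forall}(A)$, and the description of the universal class generated by $A$ yields $B\in\mathbf{ISP}_u(A)$, an embedding into an ultrapower. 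For (iv), Mal'cev's theorem $\mathrm{Qvar}(A)=\mathbf{ISPP}_u(A)$ realizes a member as a subalgebra of a product of ultrapowers, and such a product is itself a filter-power of $A$—over $\bigsqcup_j I_j$ with the filter of all $E$ for which $E\cap I_j\in\mathcal D_j$ for every $j$. Finally (vii): a finitely generated algebra realizing a complete atomic type consistent with $\mathrm{Th}_{\forall}(A)$ (resp.\ with the quasi-identities of $A$) sits inside a model of $\mathrm{Th}_{\forall}(A)$ (resp.\ inside $\mathrm{Qvar}(A)$), so it reduces to (v) (resp.\ (iv)).

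The two lemmas are short and form the routine core. The friction I expect lies in the bookkeeping that places (iii), (iv), (vi) and (vii) inside $\mathbf{SP}_f(A)$: one must match the definitions of ``discriminated'', ``limit'', and ``algebra defined by a complete atomic type'' with the operator descriptions $\mathbf{ISP}_u$ and $\mathbf{ISPP}_u$, and verify that products and ultrapowers of filter-powers of $A$ remain filter-powers. None of this is deep, but aligning the quantifier structure of ``a complete atomic type in the universal theory'' with an honest embedding into an ultrapower is the detail most likely to require care.
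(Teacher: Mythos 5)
Your proposal is correct, but there is nothing in the paper itself to compare it against: the paper does not prove this statement, it simply quotes it (``The following theorem is proved in \cite{DMR2}''). Measured against that source, your route is essentially the standard one. Your two core lemmas are right: the restriction lemma $V_B(S)=V_C(S)\cap B^n$ for a subalgebra $B\leq C$ is immediate, and the filter-power lemma correctly isolates the only facts needed --- terms are computed coordinatewise in a reduced power, equality modulo $\mathcal{F}$ means the agreement set lies in $\mathcal{F}$, and filters are closed under finite intersections and supersets, which is exactly where finiteness of $S_0$ enters. Your placement of the remaining classes inside $\mathbf{ISP}_f(A)$ also checks out: (ii) via the embedding $\Gamma(Y)\cong T(Y)\leq A^{Y}$, which the paper itself records; (iii) because discriminated implies separated, giving the diagonal embedding into $A^{\mathrm{Hom}(C,A)}$; (v) via Frayne's theorem, $\mathrm{Mod}(\mathrm{Th}_{\forall}(A))=\mathbf{ISP}_u(A)$; (iv) via Mal'cev's theorem together with your observation (which is correct) that a product of ultrapowers of $A$ is a reduced power of $A$ over the disjoint union of the index sets with the filter you describe; and (vii) reduces to (v) and (iv) because completeness of the atomic type forces the subalgebra generated by any realization to be isomorphic to the algebra presented by the atomic part of the type. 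The one item resting entirely on citation is (vi): that a limit algebra over $A$ is discriminated by $A$ is one of the unification theorems of \cite{DMR1}, \cite{DMR2}, and it is legitimately available here since $A$ is assumed equationally noetherian; you flag this dependence explicitly, which is fair. In short: no gap, and your sketch is a self-contained version of the argument that the paper delegates wholesale to \cite{DMR2}.
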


\section{Equational Artinian algebras}

We say that an algebra $A$ is {\em equational Artinian} if every
ascending chain of algebraic sets over $A$ terminates. We first, review some basic properties of equational Artinian algebras (see \cite{ModSH} for the proofs). One can ask about the existence of  an equational condition, equivalent to being equational Artinian. We proved that the correct condition is not in terms of equations, but rather it can be formulated in terms of radical ideals. We showed that  $A$ is equational Artinian, iff for any $n$ and $E\subseteq A^n$, there exists a finite subset $E_0\subseteq E$ such that
$$
\mathrm{Rad}(E)=\mathrm{Rad}(E_0).
$$
Note that this condition is in some sense the dual condition of being equational noetherian. Recall the following definition of the radical ideals.

\begin{definition}
Let $A$ be an algebra and $E\subseteq A^n$, for some $n$. Then $\mathrm{Rad}(E)$ is  called an $A$-radical ideal of the term algebra
$T_{\mathcal{L}}(x_1, \ldots, x_n)$.
\end{definition}

The collection of all $A$-radical ideals of the term algebra, is a subbasis of closed sets for a topology on the set of atomic formulas $At_{\mathcal{L}}(x_1, \ldots, x_n)$ which we call it {\em the radical topology with respect to $A$}. As in the case of the Zariski topology, closed sets of this new space are arbitrary intersections of finite unions of $A$-radical ideals.
The next theorem provides some equivalent conditions for the property of being equational Artinian. The conditions i, ii and vi are not new (they are obtained in \cite{ModSH}), so we will concentrate on iv and v. Recall that we say that topological space is {\em contra-compact} if every covering of it by closed sets has a finite subcover.

\begin{theorem}
For an algebra $A$, the following conditions are equivalent;\\

i-  For any $n$ and $E\subseteq A^n$, there exists a finite subset $E_0\subseteq E$ such that
$$
\mathrm{Rad}(E)=\mathrm{Rad}(E_0).
$$

ii- Every descending chain of $A$-radical ideals terminates.\\

iii- $A$ is equational Artinian.\\

iv- For any $n$, the radical topology with respect to $A$ on $At_{\mathcal{L}}(x_1, \ldots, x_n)$ is noetherian.\\

v- For any $n$, every subset of $At_{\mathcal{L}}(x_1, \ldots, x_n)$ is compact.\\

vi- For any $n$, every subset of $A^n$ is contra-compact.
\end{theorem}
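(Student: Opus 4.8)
The plan is to treat conditions i, ii, iii and vi as already established in \cite{ModSH} and to splice the two new conditions iv and v into this equivalence class. It suffices to prove iv $\Leftrightarrow$ ii together with the purely topological equivalence iv $\Leftrightarrow$ v. The implication iv $\Rightarrow$ ii is immediate: every $A$-radical ideal is a subbasic closed set, so a descending chain of $A$-radical ideals is a descending chain of closed sets and terminates once the radical topology is noetherian. For iv $\Leftrightarrow$ v I would invoke the standard fact that a space is noetherian iff every subspace is compact: for iv $\Rightarrow$ v one uses that a subspace of a noetherian space is again noetherian and that the ascending chain condition on open sets yields a finite subcover of any open cover (take the largest finite union occurring in the cover); for v $\Rightarrow$ iv one notes that a strictly increasing chain of open sets would display its union as a non-compact subspace.

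The substance is ii $\Rightarrow$ iv. First I would record two structural facts. Since $\mathrm{Rad}(E_1)\cap\mathrm{Rad}(E_2)=\mathrm{Rad}(E_1\cup E_2)$, the family of $A$-radical ideals is closed under arbitrary intersection, so the \emph{basic} closed sets, namely finite unions of $A$-radical ideals, form a sublattice that is also closed under finite intersection. Granting for the moment that the basic closed sets satisfy the descending chain condition, a minimal–finite–subintersection argument shows that every closed set is \emph{already} a basic closed set: writing a closed set as $\bigcap_\alpha B_\alpha$ with $B_\alpha$ basic, the finite subintersections are basic and downward directed, a minimal one $M=\bigcap_{\alpha\in G_0}B_\alpha$ exists by DCC, and minimality forces $M\subseteq B_\beta$ for every $\beta$, whence $M=\bigcap_\alpha B_\alpha$. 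Consequently DCC on closed sets (i.e.\ noetherianity) follows from DCC on basic closed sets, and everything reduces to the key lemma: \emph{if the $A$-radical ideals satisfy DCC, then so do their finite unions.}

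For the key lemma I would argue by contradiction using König's lemma. Suppose $U_1\supsetneq U_2\supsetneq\cdots$ is strictly descending with $U_k=\bigcup_{i=1}^{m_k}R_k^{(i)}$, each $R_k^{(i)}$ an $A$-radical ideal, and pick witnesses $w_k\in U_k\setminus U_{k+1}$; the set $W=\{w_1,w_2,\ldots\}$ is infinite and meets each $U_k$ in exactly the tail $\{w_j:j\ge k\}$. Build the tree whose level-$k$ nodes are the component choices $(i_1,\ldots,i_k)$ for which $R_1^{(i_1)}\cap\cdots\cap R_k^{(i_k)}\cap W$ is infinite. A pigeonhole over the finitely many components at each level (using that removing $U_{k+1}$ deletes only $w_1,\ldots,w_k$ from a trace) shows the tree has a node at every level, and it is finitely branching, so König's lemma yields an infinite branch $(i_1,i_2,\ldots)$. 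Put $T_k=R_1^{(i_1)}\cap\cdots\cap R_k^{(i_k)}$, a descending chain of $A$-radical ideals. It cannot stabilize: a limit value would lie in every $U_k$, hence in $\bigcap_k U_k$, yet its trace on $W$ is infinite and so it contains some $w_j\notin U_{j+1}$, a contradiction. Thus $T_k$ drops strictly infinitely often, contradicting DCC on $A$-radical ideals and proving the lemma.

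The main obstacle is exactly this last step. The naive approach is to build the tree directly on the components $R_k^{(i)}$ via the relation $R_{k+1}^{(j)}\subseteq R_k^{(i)}$, but this collapses because an $A$-radical ideal contained in a finite union of $A$-radical ideals need not be contained in any single one, since prime avoidance fails for congruences. Introducing the witness set $W$ and passing to infinite traces is what repairs the construction: it simultaneously guarantees a finitely branching tree (so that König applies) and, via the witnesses, forces the extracted chain of $A$-radical ideals to descend strictly infinitely often. The remaining pieces — the two structural facts and the general topological equivalence iv $\Leftrightarrow$ v — are routine by comparison.
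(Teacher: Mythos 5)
Your proposal is correct, and at its core it follows the same architecture as the paper's proof: splice iv and v into the equivalences already known from \cite{ModSH}, reduce everything to the key lemma that DCC for $A$-radical ideals forces DCC for their finite unions, prove that lemma by applying K\"{o}nig's lemma to a finitely branching tree built from intersections of the components, and then collapse arbitrary intersections of basic closed sets to finite ones so that the closed sets of the radical topology are exactly the finite unions of $A$-radical ideals. There are, however, two genuine differences. First, you obtain v from the purely topological fact that a space is noetherian iff every subspace is compact, whereas the paper argues v directly against the algebra: it proves v $\Rightarrow$ i by writing $\mathrm{Rad}(E)=\bigcap_{\overline{a}\in E}\mathrm{Rad}(\overline{a})$ and extracting a finite subcover of the complement, and iii $\Rightarrow$ v by first using noetherianity to claim that every open cover can be taken to consist of sets $At_{\mathcal{L}}(x_1,\ldots,x_n)\setminus\mathrm{Rad}(\overline{a})$; your route is more modular and quietly sidesteps the delicate point of why an arbitrary open cover may be replaced by one of that special form, while the paper's route has the virtue of staying entirely inside the radical machinery. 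Second, and more substantially, your witness-set device is a genuine improvement on the paper's K\"{o}nig argument: the paper's tree has vertices $\mathrm{Rad}(E_i)\cap\mathrm{Rad}(E_j^{\prime})\cap\cdots$ and simply asserts that every path is a \emph{strictly} descending chain of radicals, which is unjustified as stated, since such an intersection can equal its predecessor (your remark that a radical ideal contained in a finite union of radical ideals need not lie in any single one is exactly the right diagnosis of why strictness cannot be assumed). Your choice of witnesses $w_k\in U_k\setminus U_{k+1}$ and the restriction to nodes whose trace on $W$ is infinite is precisely what forces the extracted branch $T_1\supseteq T_2\supseteq\cdots$ to drop strictly infinitely often, so your version of the key lemma is complete at the one point where the paper's argument is only a sketch.
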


\begin{proof}
We only show that iv and v are equivalent to the property  of being equational Artinian. To prove the equivalence of iii and iv,
suppose that the algebra $A$ is equationally Artinian. We assume that $B$ is the set of all $A$-radicals corresponding to the  subsets of $A^n$. So $B$ satisfies the descending chain condition. Let $B_1$ be the set of all finite unions of the radicals in $B$ and let $B_2$    be the set of all intersections of sets in $B_1$. By the definition of radical-topology,  $B_2$ is just the set of  the closed subsets of this topology. First we prove that $B_1$ satisfies the descending chain condition. Suppose that
$$
M_1=\mathrm{Rad}(E_1)\cup \ldots\cup \mathrm{Rad}(E_{m}), \;\;\;\;M_2=\mathrm{Rad}(E_1^{\prime})\cup \ldots\cup \mathrm{Rad}(E_k^{\prime})
$$
are sets in $B_1$ and $M_2\subset M_1$. For every $i\leq m$ and $j\leq k$, we have
$\mathrm{Rad}(E_i)\cap \mathrm{Rad}(E_j^{\prime})\subset \mathrm{Rad}(E_i)$. Hence we can gain a tree with  root vertex $\mathrm{Rad}(E_i)$ and with a unique edge from the root to every subset $\mathrm{Rad}(E_i)\cap \mathrm{Rad}(E_j^{\prime})$.
We consider the following strictly descending chain of subsets of $B_1$:
$$
M_1\supset M_2 \supset M_3 \supset \ldots .
$$
As we mentioned, we obtain  a tree such that each vertex  is a finite intersection of sets in $B$, hence each vertex is in $B$ itself.  Since each vertex is connected to only finite other vertexes, so each vertex has a finite degree. So,  every path corresponds to a strictly descending chain of radicals and since $A$ is equationally Artinian, so the path is finite. By K\"{o}nig's lemma this implies that the graph is finite. Therefore the above chain is also finite. So $B_1$ satisfies the descending chain condition and is closed under finite intersection.

Now  we prove that $B_2$ satisfies the descending chain condition too. Suppose $\bigcap_{i=1}^{\infty}R_i$ is an infinite intersection of subsets of $B_1$. Then we have the following chain:
$$
R_1\supseteq R_1\cap R_2 \supseteq  R_1\cap R_2\cap R_3\supseteq \ldots.
$$
Since $B_1$ satisfies descending chain condition and is closed under finite intersection, so the chain terminates. Therefore
$$
\exists m  \;\; R_1\cap R_2\cap \ldots \cap R_m=\bigcap_{i=1}^{\infty}R_i.
$$
Therefore,  every infinite intersection of subsets of $B_1$ is in fact a finite intersection in $B_1$ and so it belongs to $B_1$. Consequently we have $B_2=B_1$ and hence  it satisfies the descending chain condition. This shows that the radical topology on  $At_{\mathcal{L}}(x_1, \ldots, x_n)$ is noetherian. Clearly, if we assume that the radical topology is noetherian, then every descending chain of $A$-radical ideals terminates and so we obtain iii.

Now we prove that v is equivalent to iii. Suppose any  subset of $At_{\mathcal{L}}(x_1, \ldots, x_n)$ is compact. For an arbitrary $E\subseteq A^n$ we have
$$
\mathrm{Rad}(E)=\bigcap_{\overline{a}\in E} \mathrm{Rad}(\overline{a}).
$$
Therefore
\begin{eqnarray*}
At_{\mathcal{L}}(x_1, \ldots, x_n)\setminus \mathrm{Rad}(E)&=&At_{\mathcal{L}}(x_1, \ldots, x_n)\setminus \bigcap_{\overline{a}\in E} \mathrm{Rad}(\overline{a})\\
 &=&\bigcup_{\overline{a}\in E}(At_{\mathcal{L}}(x_1, \ldots, x_n)\setminus   \mathrm{Rad}(\overline{a})).
\end{eqnarray*}

Since any  subset of $At_{\mathcal{L}}(x_1, \ldots, x_n)$  is compact, there is a finite number of points $\overline{a}_1, \ldots, \overline{a}_m$, such that
$$
At_{\mathcal{L}}(x_1, \ldots, x_n)\setminus \mathrm{Rad}(E)=\bigcup_{i=1}^{m}(At_{\mathcal{L}}(x_1, \ldots, x_n)\setminus \mathrm{Rad}(\overline{a}_i)).
$$
Hence
$$
\mathrm{Rad}(E)=\bigcap_{i=1}^{m}\mathrm{Rad}(\overline{a}_i)=\mathrm{Rad}(E_{0}),
$$
where $E_0=\{  \overline{a}_1, \ldots, \overline{a}_m\}$. So $A$ is equational Artinian.
Conversely, suppose that $A$ is  equational Artinian. Note that by this assumption, every closed set in radical topology is a finite union of $A$-radical ideals. This is because by iv, every descending chain of closed sets in $At_{\mathcal{L}}(x_1, \ldots, x_n)$ terminates. Let $S\subseteq At_{\mathcal{L}}(x_1, \ldots, x_n)$ and
$$
S\subseteq \bigcup_{i}(At_{\mathcal{L}}(x_1, \ldots, x_n)\setminus \mathrm{Rad}(\overline{a}_i))
$$
be an open covering of $S$ (by the point we just mentioned, every open covering of $S$ has this form). Then
\begin{eqnarray*}
S&\subseteq& At_{\mathcal{L}}(x_1, \ldots, x_n)\setminus \bigcap_{i}\mathrm{Rad}(\overline{a}_i)\\
&=&At_{\mathcal{L}}(x_1, \ldots, x_n)\setminus \mathrm{Rad}(E).
\end{eqnarray*}

Now, since $A$ is equational Artinian, there is a finite $E_0\subseteq E$ such that $\mathrm{Rad}(E)=\mathrm{Rad}(E_0)$,  so
\begin{eqnarray*}
S&\subseteq& At_{\mathcal{L}}(x_1, \ldots, x_n)\setminus \mathrm{Rad}(E_0)\\
&=&\bigcup_{i=1}^{m}(At_{\mathcal{L}}(x_1, \ldots, x_n)\setminus \mathrm{Rad}(\overline{a}_i)).
\end{eqnarray*}

\end{proof}

This theorem, completes the analogy between two {\em dual} properties of being equational noetherian and equational Artinian. As a result, every closed subset of $At_{\mathcal{L}}(x_1, \ldots, x_n)$ is a unique union of finitely many irreducible sets. These irreducible sets are necessarily have the form $\mathrm{Rad}(Y)$, for some $Y\subseteq A^n$. Such an algebraic set $Y$ will be called {\em large}. So, if $A$ is equational Artinian, then every algebraic set $Y\subseteq A^n$ can be written uniquely as a finite intersection $Y=\bigcap_{i=1}^mY_i$, such that every $Y_i$ is a large algebraic set.

As another application of the previous theorem, we show that every ultrapower of an equational Artinian algebra is equational Artinian.

\begin{theorem}
Let $A$ be equational Artinian and $I$ be a set. Let $\mathcal{U}$ be an ultrafilter over $I$. Then the corresponding ultrapower $A^I/\mathcal{U}$ is also equational Artinian.
\end{theorem}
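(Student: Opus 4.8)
The plan is to verify condition (i) of the previous theorem for the ultrapower $\bar{A}=A^I/\mathcal{U}$, or, what is equivalent by the chain of implications (i)$\Leftrightarrow$(ii)$\Leftrightarrow$(iii) established there, to rule out any infinite strictly descending chain of $\bar{A}$-radical ideals in $At_{\mathcal{L}}(x_1,\ldots,x_n)$. I would argue by contradiction. Given such a chain $R_1\supsetneq R_2\supsetneq\cdots$, I would write $R_k=\mathrm{Rad}_{\bar{A}}(E_k)$ with $E_1\subseteq E_2\subseteq\cdots\subseteq\bar{A}^n$ (replacing each $E_k$ by $\bigcup_{j\le k}E_j$), and for every $k$ choose a witnessing equation $(u_k\approx v_k)\in R_k\setminus R_{k+1}$ together with a tuple $\bar{b}_k\in E_{k+1}$ violating it. Because $(u_l\approx v_l)\in R_l\subseteq R_{k+1}$ for $l\ge k+1$, each $\bar{b}_k$ satisfies $(u_l\approx v_l)$ for every $l>k$ while violating $(u_k\approx v_k)$, so the whole configuration is governed by the single, fixed sequence of equations $(u_m\approx v_m)$.

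The bridge to the base algebra is \L{}o\'{s}'s theorem. Each tuple $\bar{b}_k$ is a class $[\beta_k]_{\mathcal{U}}$ of a function $\beta_k\colon I\to A^n$, and an atomic formula holds at $\bar{b}_k$ in $\bar{A}$ exactly when it holds at $\beta_k(i)$ in $A$ for $\mathcal{U}$-almost all $i$. For each fixed $k$, the assertion that there exist tuples $z_1,\ldots,z_k$ with $z_m$ violating $(u_m\approx v_m)$ and satisfying $(u_l\approx v_l)$ for $m<l\le k$ is a \emph{single first-order sentence}, the fixed terms $u_m,v_m$ entering only as parameters. Since it holds in $\bar{A}$ (witnessed by $\bar{b}_1,\ldots,\bar{b}_k$), \L{}o\'{s}'s theorem forces it to hold in $A$. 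Hence, for every $k$, the algebra $A$ itself realizes a strictly descending chain of $A$-radical ideals of length $k$, all of whose strict steps are witnessed by the same equations $(u_m\approx v_m)$.

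To upgrade these compatible finite chains into a single infinite one, I would organize the realizable sign-patterns of the equations $(u_1\approx v_1),(u_2\approx v_2),\ldots$ over points of $A^n$ into a rooted tree: a vertex of depth $d$ records a pattern of the first $d$ equations that is realized by some point of $A^n$, and edges are given by restriction. The finiteness condition (i) for $A$ should keep this tree finitely branching, while the sentences transferred above guarantee vertices of every depth, so the tree is infinite; König's lemma then produces an infinite branch, which I would convert into an infinite strictly descending chain of $A$-radical ideals, contradicting that $A$ is equational Artinian. This last conversion is exactly where I expect the real difficulty to lie: \L{}o\'{s}'s theorem transfers only first-order statements, so on its own it manufactures chains of each finite length, and arbitrarily long finite chains are perfectly compatible with the descending chain condition on radicals. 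The entire weight of the argument therefore falls on arranging the transferred finite data coherently enough—feeding in the equational Artinian property of $A$ a second time, through condition (i), to control the branching and to guarantee that distinct branch segments genuinely correspond to distinct radicals—so that König's lemma yields a strictly descending infinite chain rather than an incoherent family of finite ones.
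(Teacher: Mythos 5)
Your reduction is correct as far as it goes, and the gap is real and sits exactly where you located it; unfortunately it is not a gap that K\"{o}nig's lemma can close. From an infinite strictly descending chain of radicals over $B=A^I/\mathcal{U}$ you correctly extract equations $(u_m\approx v_m)$ and points $\overline{b}_m$ with the upper-triangular property, and \L{}o\'{s}'s theorem does transfer, for every $k$, the existence of a length-$k$ configuration into $A$. But to finish you would need, for every $m$, a \emph{single} point of $A^n$ violating $(u_m\approx v_m)$ and satisfying $(u_l\approx v_l)$ for \emph{all} $l>m$, whereas what you have is, for each finite $N$, a point violating $(u_m\approx v_m)$ and satisfying $(u_l\approx v_l)$ for $m<l\le N$. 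That is, you must pass from ``$V_A(\{u_l\approx v_l:\ m<l\le N\})\not\subseteq V_A(u_m\approx v_m)$ for every $N$'' to ``$\bigcap_{l>m}V_A(u_l\approx v_l)\not\subseteq V_A(u_m\approx v_m)$''. This is a compactness (finite-character) statement about solution sets of an \emph{infinite} system over $A$: it is exactly what the equationally \emph{noetherian} property would provide (replace the infinite system by an equivalent finite subsystem), and it is not what the Artinian property provides, since condition (i) controls radicals $\mathrm{Rad}(E)$ of sets of points, i.e., the other side of the Galois correspondence. As you yourself observe, arbitrarily long finite descending chains of radicals are compatible with the descending chain condition, so nothing short of this compactness statement will finish the argument --- and \L{}o\'{s}'s theorem transfers truth of single sentences, never solutions of infinite systems.

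The tree device cannot supply the missing compactness. First, the sign-pattern tree is automatically finitely branching (each depth-$d$ pattern has at most two extensions); condition (i) plays no role there. Second, and fatally, K\"{o}nig's lemma produces an infinite branch each of whose finite prefixes is realized by \emph{some} point of $A^n$, possibly a different point at each depth; it does not produce one point of $A^n$ realizing the whole branch, and that single realizing point is precisely what you need. Indeed, once the target pattern ($0$ at position $m$, $1$ at every position $l>m$) is prescribed, there is essentially nothing for K\"{o}nig's lemma to choose: the difficulty is not finding the branch but finding a witness for it, which is again the compactness failure above. The paper's own proof avoids this trap by never transferring points: membership of $(p'\approx q')$ in $\mathrm{Rad}_B(p\approx q)$ is the truth of the single sentence $\forall\overline{x}\,(p(\overline{x})\approx q(\overline{x})\to p'(\overline{x})\approx q'(\overline{x}))$, so elementary equivalence of $A$ and $B$ gives $\mathrm{Rad}_B(p\approx q)=\mathrm{Rad}_A(p\approx q)$ for every equation; the paper then expresses radicals of arbitrary systems through these single-equation radicals, concludes that $A$ and $B$ induce the same radical topology on $At_{\mathcal{L}}(x_1,\ldots,x_n)$, and invokes Theorem 2. (The step from single equations to infinite systems is the delicate point of the paper's argument as well, but the moral stands: radical ideals are governed equation-by-equation by first-order sentences and hence transfer along elementary equivalence, whereas solution points of infinite systems do not.) So your proposal, as written, is incomplete, and the mechanism you suggest for completing it would not work.
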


\begin{proof}
Consider an arbitrary equation $(p\approx q)\in At_{\mathcal{L}}(x_1, \ldots, x_n)$ and let $B= A^I/\mathcal{U}$. We have
$$
\mathrm{Rad}_B(p\approx q)=\{ (p^{\prime}\approx q^{\prime}):\ B\vDash \forall \overline{x}(p(\overline{x})\approx q(\overline{x})\to p^{\prime}(\overline{x})\approx q^{\prime}(\overline{x}))\}.
$$
Since $A$ and $B$ are elementary equivalent, so we must have $\mathrm{Rad}_B(p\approx q)=\mathrm{Rad}_A(p\approx q)$. Now, for all $S\subseteq At_{\mathcal{L}}(x_1, \ldots, x_n)$, we have
\begin{eqnarray*}
\mathrm{Rad}_B(S)&=& \bigcap_{(p\approx q)\in S}\mathrm{Rad}_B(p\approx q)\\
                 &=& \bigcap_{(p\approx q)\in S}\mathrm{Rad}_A(p\approx q)\\
                 &=&\mathrm{Rad}_A(S).
\end{eqnarray*}
This shows that the radical topologies on  $At_{\mathcal{L}}(x_1, \ldots, x_n)$ with respect to $A$ and $B$ are the same. So by Theorem 2, the algebra $B$ is equational Artinian.
\end{proof}

As an example, let $G$ be  finite group and $A=G\times G\times \cdots$ be a direct product of infinitely many copies of $G$. We know that $A$ is equational Artinian and so for any set $I$ and any ultrafilter $\mathcal{U}$ over $I$ the group $A^I/\mathcal{U}$ is also equational Artinian.

We close this note by a version of Theorem 1 for equational Artinian algebras which can be proved using Theorem 3 and similar arguments of \cite{DMR2}.

\begin{theorem}
Let $A$ be an equational Artinian algebra. Then the following algebras are also equational Artinian:\\

i- any subalgebra and ultrapower of $A$.\\

ii- any coordinate algebra over $A$.\\

iii- any fully residually  $A$-algebra.\\

iv- any algebra universally equivalent to $A$.\\

v- any limit algebra over $A$.\\

vii- any finitely generated algebra defined by a complete atomic type in the universal theory of $A$.
\end{theorem}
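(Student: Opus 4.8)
The plan is to reduce every item to a single transfer principle resting on the equivalence ii$\Leftrightarrow$iii of Theorem 2: an algebra is equational Artinian exactly when every descending chain of its radical ideals terminates. First I would isolate the following observation. Suppose $C$ is equational Artinian and $B$ embeds into a direct power $C^{J}$, with coordinate homomorphisms $\pi_{j}\colon B\to C$ $(j\in J)$. For a tuple $\overline{b}\in B^{n}$ and an atomic formula $(p,q)\in At_{\mathcal{L}}(x_{1},\ldots,x_{n})$, the fact that homomorphisms commute with term functions together with injectivity of the embedding gives
\[
\mathrm{Rad}_{B}(\overline{b})=\bigcap_{j\in J}\mathrm{Rad}_{C}(\pi_{j}(\overline{b}))=\mathrm{Rad}_{C}(\{\pi_{j}(\overline{b}):j\in J\}),
\]
so each $B$-point radical is a $C$-radical ideal. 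As an arbitrary $B$-radical ideal is an intersection of such point radicals, it equals $\mathrm{Rad}_{C}$ of a union and is again a $C$-radical ideal. Hence every descending chain of $B$-radical ideals is a descending chain of $C$-radical ideals, which terminates since $C$ is equational Artinian; by Theorem 2, $B$ is equational Artinian. I will call this the \emph{transfer principle}: any algebra embeddable in a direct power of an equational Artinian algebra is equational Artinian.

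Next I would dispatch the items that stay inside powers of $A$. For i the subalgebra case is immediate with $C=A$ and the inclusion $B\hookrightarrow A=A^{1}$, since the term functions of a subalgebra are restrictions of those of $A$; the ultrapower case is exactly Theorem 3. For ii, a coordinate algebra $\Gamma(Y)\cong T(Y)$ embeds into $A^{Y}$ via $[p]_{Y}\mapsto (p^{A}(\overline{a}))_{\overline{a}\in Y}$, so the transfer principle applies with $C=A$; concretely one checks that $\mathrm{Rad}_{\Gamma(Y)}(\overline{f})=\mathrm{Rad}_{A}(\{(p_{1}^{A}(\overline{a}),\ldots,p_{n}^{A}(\overline{a})):\overline{a}\in Y\})$ for $\overline{f}=([p_{1}]_{Y},\ldots,[p_{n}]_{Y})$. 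For iii, being fully residually $A$ yields residually $A$, that is, an embedding of $B$ into a power of $A$ through the separating family $\mathrm{Hom}(B,A)$, and the transfer principle applies once more with $C=A$.

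The remaining items iv, v and vii cannot be realized inside a power of $A$ and must route through an ultrapower. The uniform strategy is: show the algebra embeds into an ultrapower $D=A^{I}/\mathcal{U}$, invoke Theorem 3 to see that $D$ is equational Artinian, and then apply the transfer principle with $C=D$ (the embedding $B\hookrightarrow D=D^{1}$ makes $B$ residually $D$). For iv this embedding is Frayne's theorem: universal equivalence $A\equiv_{\forall}B$ gives $\mathrm{Th}_{\forall}(A)\subseteq\mathrm{Th}_{\forall}(B)$, which is equivalent to $B$ being embeddable in an ultrapower of $A$. Items v and vii follow the same pattern once their defining data are translated into such an embedding: a limit algebra over $A$ is, by its standard characterization, a finitely generated algebra embeddable in an ultrapower of $A$, and a finitely generated algebra defined by a complete atomic type consistent with $\mathrm{Th}_{\forall}(A)$ realizes that type in a model of $\mathrm{Th}_{\forall}(A)$ and hence, by the same Frayne argument, embeds in an ultrapower of $A$.

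The point-radical computations behind the transfer principle are the routine part. The genuine obstacle is the model-theoretic input for iv, v and vii: I must fix the precise definitions of universal equivalence, of limit algebra over $A$, and of ``algebra defined by a complete atomic type in the universal theory of $A$'' exactly as used in \cite{DMR2}, and verify in each case that these notions furnish an honest embedding into an ultrapower of $A$, not merely a weaker residual property. Once that embedding is in place, Theorem 3 and the transfer principle close every case uniformly, and the whole difficulty is concentrated in matching the definitions to Frayne's embedding theorem.
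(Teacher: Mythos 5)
Your proposal is correct, but there is no detailed argument in the paper to compare it against: the authors give no proof of this theorem, only the remark that it ``can be proved using Theorem 3 and similar arguments of \cite{DMR2}.'' Your proof is a legitimate, essentially complete realization of that one-line sketch, and your \emph{transfer principle} is precisely the lemma the paper leaves implicit. The principle itself is sound: if $B\hookrightarrow C^{J}$ with coordinate maps $\pi_{j}$, then for $\overline{b}\in B^{n}$ one has $p^{B}(\overline{b})=q^{B}(\overline{b})$ if and only if $p^{C}(\pi_{j}(\overline{b}))=q^{C}(\pi_{j}(\overline{b}))$ for every $j$ (homomorphisms commute with term functions, and the $\pi_{j}$ jointly separate points), so $\mathrm{Rad}_{B}(\overline{b})=\mathrm{Rad}_{C}(\{\pi_{j}(\overline{b}):j\in J\})$; since the radical of a union is the intersection of the radicals, every $B$-radical ideal is literally a $C$-radical ideal inside the common ambient set $At_{\mathcal{L}}(x_{1},\ldots,x_{n})$, descending chains of the former are descending chains of the latter, and the equivalence ii$\Leftrightarrow$iii of Theorem 2 closes the argument. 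This disposes of i, ii, iii, and routing iv, v, vii through an ultrapower via Theorem 3 is exactly the role Theorem 3 is meant to play here. The only external input --- which you correctly isolate as the real content --- is the identification of algebras universally equivalent to $A$, limit algebras over $A$, and finitely generated algebras defined by complete atomic types in $\mathrm{Th}_{\forall}(A)$ with algebras embeddable into ultrapowers of $A$; these are Frayne-type embedding results contained in the unification theorems of \cite{DMR1} and \cite{DMR2}, and they hold without any equationally noetherian hypothesis, so the reduction is legitimate. In effect your write-up is more self-contained than the paper's own treatment, since it makes explicit both the radical computation and the exact model-theoretic facts being imported.
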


\end{document}